\DeclareSymbolFont{cyrletters}{OT2}{wncyr}{m}{n}
\DeclareMathSymbol{\Sha}{\mathalpha}{cyrletters}{"58}
\renewcommand\subsubsection{\@startsection{subsubsection}{3}%
  \z@{.5\linespacing\@plus.7\linespacing}{-.5em}%
  {\normalfont\bfseries}} 
\newcommand\makequotestraight{%
\begingroup\lccode`~=`' 
\lowercase{\endgroup\let~}\textquotesingle
\catcode`'=\active
}
\newtheoremstyle{mythm}                   
{6pt}
{6pt}
{\it}
{}
{\bf}
{.}
{.5em}
{}
\newtheoremstyle{mydef}                   
{6pt}
{6pt}
{}
{}
{\bf}
{.}
{.5em}
{}
\newtheoremstyle{myrem}                   
{6pt}
{6pt}
{}
{}
{\bf}
{.}
{.5em}
{}
\theoremstyle{mythm}      
\newtheorem{theorem}{Theorem}[section]
\newtheorem{proposition}[theorem]{Proposition}
\newtheorem{corollary}[theorem]{Corollary}
\theoremstyle{mydef}      
\newtheorem{definition}[theorem]{Definition}
\newtheorem{example}[theorem]{Example}
\theoremstyle{myrem}
\newtheorem{remark}[theorem]{Remark}
\numberwithin{equation}{section}
\newcounter{ithmcount}
\newenvironment{ithm}{\begin{list}{{\rm \alph{ithmcount})}}{\usecounter{ithmcount}\labelwidth18pt
      \leftmargin18pt \topsep3pt \itemsep1pt \parsep2pt}}{\end{list}}
\renewcommand{\leq}{\leqslant} 
\renewcommand{\geq}{\geqslant}
\newcommand{\Q}{\mathbb{Q}}
\newcommand{\Z}{\mathbb{Z}}
\newcommand{\ad}{\mathrm{\mathop{ad}}}
\newcommand{\rank}{\mathrm{\mathop{rank}}}
\newcommand{\gl}{\mathfrak{\mathop{gl}}}
\newcommand{\g}{\mathfrak{g}}
\newcommand{\Der}{{\rm Der}}
\newcommand{\Inn}{{\rm Inn}}
\newcommand{\AID}{{\rm AID}}
\newcommand{\CAID}{{\rm CAID}}
\newcommand{\udot}{\mathpalette\udot@\relax}
\newcommand{\udot@}[2]{%
  \begingroup
  \sbox\z@{$#1{:}$}%
  \sbox\tw@{$#1{.}$}%
  \raisebox{\dimexpr\ht\z@-\ht\tw@}{$\m@th#1.$}%
  \endgroup
}
\begin{document}

\title{A computational approach to  almost-inner derivations}
\subjclass[2000]{} 
\author[H. Dietrich]{Heiko Dietrich}
\author[W. A. de Graaf]{Willem A.\ de Graaf}
\address[Dietrich]{School of Mathematics, Monash University, Clayton VIC 3800, Australia}
\address[de Graaf]{Department of Mathematics, University of Trento, Povo (Trento), Italy}
\email{\rm heiko.dietrich@monash.edu, willem.degraaf@unitn.it}
\thanks{The authors thank Boris Kunyavskii for proposing this question to us (at the Monash Prato Workshop `Lie Theory: frontiers, algorithms, and applications'), and suggesting to look at the example of~\cite{Sah}.}
\date{\today}

\begin{abstract}
We present a computational approach to determine the space of almost-inner derivations of a finite dimensional Lie algebra given by a structure constant table. We also present an example of a Lie algebra for which the quotient algebra of the almost-inner derivations modulo the inner derivations is non-abelian. This answers a question of Kunyavskii and  Ostapenko.
\end{abstract}

\maketitle

\section{Introduction}\label{sec_intro} 

\noindent Let $\g$ be a finite dimensional Lie algebra over a field $F$. A linear map $\delta\colon \g\to \g$ is a {derivation} if for all $a,b\in \g$ it satisfies
\[\delta([a,b])=[\delta(a),b]+[a,\delta(b)].\]The set of all derivations of $\g$ forms an $F$-vector space, denoted $\Der(\g)$, and  a short calculation shows that $\Der(\g)$ is a Lie-subalgebra of $\gl(\g)$. For every $a\in\g$ the adjoint homomorphism $\ad(a)\colon \g\to\g$, $b\mapsto [a,b]$, is a derivation; these are the inner derivations of $\g$ and they form a subalgebra $\Inn(\g)$ of $\Der(\g)$; note that $[\ad(a),\ad(b)](x)=\ad([a,b])(x)$. In particular, the map $\ad\colon \g\to\Inn(\g)$ is a surjective homomorphism whose kernel is the centre $\mathfrak{z}(\g)$ of $\g$, and $\Inn(\g)$ is spanned by $\ad(b)$ where $b$ runs over an $F$-basis of $\g$. If $\delta$ is a derivation of $\g$, then $[\delta,\ad(a)]=\ad(\delta(a))$ for all $a\in\g$, which shows that $\Inn(\g)$ is an ideal of $\Der(\g)$. A derivation $\delta$ of $\g$ is {almost-inner} if there exists a map $A_\delta\colon \g\to\g$ such that for every $a\in \g$ it satisfies
\[\delta(a)=[A_\delta(a),a],\]that is, $\delta(a)\in [\g,a]$ for all $a\in \g$. The map $A_\delta$ is neither unique nor linear in general; for example, one can modify each image of $A_\delta$ by adding a different central element of $\g$.  The $F$-space of all almost-inner derivations on $\g$ is denoted $\AID(\g)$. An almost-inner derivation $\delta$ is {central almost-inner} if there is some $a\in \g$ such that $\delta-\ad(a)$ maps $\g$ into the centre of $\g$. We follow the convention in \cite{burde18} and denote the space of central almost inner derivation of $\g$ by $\CAID(\g)$.  Since $\ad(a)(b)=[a,b]$ for every $a,b\in\g$, the inner derivation $\ad(a)$ is an almost-inner derivation with constant map $A_{\ad(a)}=a$. More generally, in \cite[Proposition 2.3]{burde18} the following inclusion of Lie subalgebras of $\Der(\g)$ is shown 
\[\Inn(\g)\leq \CAID(\g)\leq \AID(\g)\leq \Der(\g).\]
Recall that $\Inn(\g)=\Der(\g)$ for every semisimple Lie algebra over a field of characteristic 0, see for example \cite[Theorem 5.3]{hum}. 

Clearly, $\Inn(\g)$ is an ideal in each of these subalgebras. It is shown in \cite[Proposition 2.4]{burde18} that $\CAID(\g)$ is an ideal in $\AID(\g)$, but it remains open whether or not $\AID(\g)$ is an ideal in $\Der(\g)$: this is conjectured to be true in  \cite[Remark 2.5]{burde18}. For more details on known results, we refer to \cite{burde18,burde21}; for example, it is known that $\AID(\g)=\CAID(\g)=\Inn(\g)$ for every complex Lie algebra $\g$ of dimension at most $4$, see \cite[Proposition 2.8]{burde18}. 

Almost-inner derivations have first been considered by Gordon and Wilson \cite{gordon} in a differential-geometric context. They have recently been studied by Saeedi and collaborators (see, e.g.\ \cite{saeedi15,saeedi18}) and Burde, Dekimpe, and Verbeke (see, e.g.\ \cite{burde18, burde21, phd}). Most recently, Kunyavskii and Ostapenko \cite{boris} used $\AID(\g)$ to define an algebra-theoretic analog of the Tate-Shafarevich group, the {Tate-Shafarevich algebra} of a Lie algebra $\g$,
\[\Sha(\g)=\AID(\g)/\Inn(\g),\]
see \cite[Section 2]{boris}. They point out that algebras with nonzero $\Sha(\g)$  reveal important geometric phenomena. One of their main results is the proof that $\AID(\g)$ is an ideal of $\Der(\g)$ for nilpotent~$\g$, partially answering the aforementioned conjecture affirmatively. This  also implies that $\Sha(\g)$ is an ideal of ${\rm Out}(\g)=\Der(\g)/\Inn(\g)$ for nilpotent $\g$, see \cite[Theorem 2.5]{boris}.

The first author of \cite{boris}  asked us for computational methods to determine the subalgebra $\AID(\g)$, and whether there is a Lie algebra $\g$ for which $\AID(\g)/\Inn(\g)$ is non-abelian, see \cite[Question 4.1(i)]{boris}. We develop such an approach in Section \ref{sec_comp}, and then discuss some computational examples (including an affirmative answer to the question) in Sections \ref{sec_ex} and \ref{secsha}.

\section{Computational approach}\label{sec_comp}
\noindent Let $\g$ be a Lie algebra over a  field $F$, with basis $\mathcal{B}=\{b_1,\ldots,b_n\}$ and corresponding structure constants $\sigma_{i,j}^k$, that is, for each $i,j\in\{1,\ldots,n\}$ we have  $[b_i,b_j]=\sum_{k=1}^n \sigma_{i,j}^k b_k$. Let $\delta$ be an endomorphism of $\g$, represented by an $n\times n$ matrix with entries $d_{j,k}$, such that $\delta(b_j)=\sum_{k=1}^n d_{j,k} b_k$ for each $i$. Since $\delta$ is a derivation if and only if $\delta([b_i,b_j])=[\delta(b_i),b_j]+[b_i,\delta(b_j)]$ for all $i,j$, this  translates to the following equations for each $i,j,\ell\in\{1,\ldots,n\}$:
\begin{eqnarray*}\sum\nolimits_{k=1}^n (\sigma_{i,j}^k d_{k,\ell}-\sigma_{k,j}^\ell d_{i,k}-\sigma_{i,k}^\ell d_{j,k})=0.
\end{eqnarray*}Since $[b_i,b_j]=-[b_j,b_i]$, it suffices to consider $i>j$. This shows that a basis for the derivation algebra $\Der(\g)$ can be computed by solving a system of $n^2(n+1)/2$ linear equations with $n^2$ unknowns, see also \cite[Section 1.9]{graaf}. A basis of $\Inn(\g)$ as a subalgebra can readily be computed by determining the matrices (with respect to $\mathcal{B})$ of the adjoints $\ad(b_i)$ for every $i\in\{1,\ldots,n\}$.

In conclusion, linear algebra methods can be used to compute $\Der(\g)$ and $\Inn(\g)$, and therefore also a complement subspace $U\leq \Der(\g)$ such that \[\Der(\g)=\Inn(\g)\oplus U.\] In the sequel we fix one such space $U$. Since $\Inn(\g)\leq \AID(\g)$, to determine $\AID(\g)$, it suffices to compute the space $U\cap \AID(\g)$.
 
\begin{definition}\label{def_D}
  For $z_0\in \g$ let $D_{z_0}$ be the subspace of $U$ that consists of all derivations $\delta$ that act as an inner derivation on the $1$-dimensional subspace spanned by $z_0$; note that $\delta$ is not inner since $U\cap\Inn(\g)=\{0\}$. In other words, $\delta\in D_{z_0}$ if and only if $\delta\in U$ and $\delta(z_0)=[z_0,x]$ for some $x\in \g$.
\end{definition}
If $z_0\in\g$ and  $\psi_{z_0}$ denotes the linear map
  \[\psi_{z_0} \colon U\oplus \g \to \g,\quad (\delta,x)\mapsto  \delta(z_0)-[z_0,x],\]then $D_{z_0}$ is the image of the kernel $\ker \psi_{z_0}$ under the projection $U\oplus \g \to U$.  This shows that we can compute $D_{z_0}$ by solving a linear equation system. In particular,
\[U\cap \AID(\g) = \bigcap\nolimits_{z_0\in \g} D_{z_0}.\]
Clearly, a finite intersection suffices to construct $U\cap \AID(\g)$ in this way. However, it is not clear how to chose suitable elements $z_0$, and how to establish that the intersection is as small as possible.

We fix the previous notation and, throughout, let $V$ be a subspace of $U$, for example, the intersection of multiple spaces $D_{z_0}$ for arbitrarily chosen elements $z_0\in \g$, so that \[\AID(\g)\leq\Inn(\g)\oplus V.\]Let  $\{\delta_1,\ldots,\delta_s\}$ be a basis of $V$. How to decide whether $\delta=\sum_{i=1}^s d_i\delta_i\in V$ lies in $\AID(\g)$? The following proposition provides an answer, but we need some notation before we can state it.

For a vector $z=(z_1,\ldots,z_n)\in F^n$ define the $n\times n$ matrix $M(z)$ as
\[M(z) = (m_{k,j}(z))_{k,j}\quad\text{where each}\quad m_{k,j}(z)=\sum\nolimits_{i=1}^n z_i\sigma_{i,j}^k;\]
recall that the $\sigma_{i,j}^k$ are the structure constants with respect to the basis $\{b_1,\ldots,b_n\}$ of $\g$. Also define $b_z\in\g$ by \[b_z=z_1b_1+\cdots +z_nb_n.\] For $\delta\in V$ write $\delta(b_z)=c_1(z)b_1+\cdots +c_n(z)b_n$ with each $c_i(z)\in F$, and define the column vector $v_\delta(z)$ as
\[v_\delta(z) = (c_1(z),\ldots,c_n(z))^\intercal.\] Lastly, denote by \[M_\delta(z)=[M(z)|v_\delta(z)]\] the augmented matrix $M(z)$ with additional column $v_\delta(z)$.

\begin{proposition}\label{prop_rk}
  The derivation $\delta\in V$ lies in $\AID(\g)$ if and only if for all $z\in F^n$ \[\rank(M(z)) = \rank(M_\delta(z)).\]
\end{proposition}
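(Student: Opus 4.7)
The plan is to reinterpret both sides of the claimed equivalence in terms of the concrete linear objects attached to $b_z$, and then reduce the rank equality to the membership $\delta(b_z)\in[\g,b_z]$.

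First I would unpack what $M(z)$ represents. By definition, the $(k,j)$-entry of $M(z)$ is $\sum_{i=1}^n z_i\sigma_{i,j}^k$, which is exactly the $k$-th coordinate of
\[
\sum_{i=1}^n z_i[b_i,b_j] \;=\; [b_z,b_j].
\]
Thus the $j$-th column of $M(z)$ is the coordinate vector of $[b_z,b_j]$, and the column space of $M(z)$ is $[b_z,\g]$. Since $[b_z,\g]=[\g,b_z]$ as subspaces of $\g$, the column space of $M(z)$ equals $[\g,b_z]$. By definition, $v_\delta(z)$ is the coordinate vector of $\delta(b_z)$.

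Next I would translate the rank equality. Standard linear algebra gives $\rank M(z)=\rank M_\delta(z)$ if and only if the extra column $v_\delta(z)$ lies in the column space of $M(z)$. By the previous step, this is equivalent to $\delta(b_z)\in [\g,b_z]$.

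Finally I would apply the definition of almost-inner. The derivation $\delta$ is in $\AID(\g)$ if and only if $\delta(a)\in[\g,a]$ for every $a\in\g$. As $z\mapsto b_z$ is a bijection $F^n\to\g$, this condition is equivalent to $\delta(b_z)\in[\g,b_z]$ for every $z\in F^n$, which by the previous paragraph is equivalent to $\rank M(z)=\rank M_\delta(z)$ for all $z\in F^n$. The argument is essentially a direct unwinding of definitions, so no step is a serious obstacle; the only thing worth being careful about is matching the sign/ordering conventions so that ``columns of $M(z)$'' and ``image of $\ad(b_z)$'' indeed describe the same subspace $[\g,b_z]$.
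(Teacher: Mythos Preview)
Your proof is correct and follows essentially the same route as the paper: both reduce the rank equality to the statement that $v_\delta(z)$ lies in the column space of $M(z)$, and identify this with $\delta(b_z)\in[\g,b_z]$. The only cosmetic difference is that the paper writes out the explicit linear system $M(z)(x_1,\ldots,x_n)^\intercal=v_\delta(z)$ arising from expanding $[b_z,x]$ in the basis, whereas you identify the column space of $M(z)$ with $[b_z,\g]$ directly; the content is the same.
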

\begin{proof}
The derivation $\delta$ is almost-inner  if and only if for every $z\in F^n$ there exists some $x\in\g$ such that $\delta(b_z)=[b_z,x]$. Writing $x=x_1 b_1+\ldots + x_n b_n$ we have 
\[[b_z,x]=\sum\nolimits_{i,j=1}^n z_ix_j [b_i,b_j]= \sum\nolimits_{k=1}^n  (\sum\nolimits_{i,j=1}^n z_ix_j  \sigma_{i,j}^k) b_k.\]Using the definition of $v_\delta(z)$ and its components $c_k(z)$, the derivation $\delta$ lies in $\AID(\g)$ if and only if for all $z_1,\ldots,z_n\in F$, there  exist $x_1,\ldots,x_n\in F$ such that for each $k\in\{1,\ldots,n\}$
\[c_k(z) = (\sum\nolimits_{i,j=1}^n z_ix_j  \sigma_{i,j}^k)=\sum\nolimits_{j=1}^n (\sum\nolimits_{i=1}^n z_i\sigma_{i,j}^k) x_j. \]The latter holds if and only if
\begin{align}\label{eq_Mz} &M(z)\cdot (x_1,\ldots,x_n)^\intercal = v_\delta(z)
\end{align}
where $M(z)$ and $v_\delta(z)$ are as defined prior to the proposition. We have that \eqref{eq_Mz} has a solution if and only if $v_\delta(z)$ lies in the column space of $M(z)$. The claim follows.
\end{proof}

Note that $m_{k,j}(z)$ and $c_i(z)$ are (linear) polynomials in $z_1,\ldots,z_n$. Now let $z=(z_1,\ldots,z_n)$ be the vector of indeterminates of $F[z_1,\ldots,z_n]$ and consider the corresponding matrices $M(z)$ and $M_\delta(z)$.
The rank of a matrix can be defined as the largest integer $r$ such that there exists a nonzero $r\times r$ minor, that is, a nonzero determinant of an $r\times r$ submatrix. For a fixed $r$, denote by $K_{r}(z)$ and $K_{\delta,r}(z)$ the set of all $r\times r$ minors of $M(z)$ and of $M_\delta(z)$, respectively; so the elements in $K_r(z)$ and $K_{\delta,r}(z)$ are polynomials in $F[z_1,\ldots,z_n]$. Let $\mathcal{I}_r(z)$ and $\mathcal{I}_{\delta,r}(z)$ be the ideals generated by $K_r(z)$ and $K_{\delta,r}(z)$, respectively. Recall that the radical of an ideal $I$ in a ring $R$ is the ideal $\sqrt{I}=\{r\in R : r^n \in I\text{ for some $n\in\mathbb{N}$}\}$. 

\begin{proposition}
  Let $\delta\in V$ be a derivation. If $\delta$ is not an almost-inner derivation, then there exists some $r$ and $w\in \mathcal{I}_{\delta,r}(z)$ with  $w\notin \sqrt{\mathcal{I}_r(z)}$. Conversely, if the field is algebraically closed and there exist some $r$ and $w\in\mathcal{I}_{\delta,r}(z)$ such that $w\notin \sqrt{\mathcal{I}_r(z)}$, then $\delta$ is not an almost-inner derivation.
\end{proposition}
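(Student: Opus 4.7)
The plan is to reformulate the pointwise rank condition of Proposition \ref{prop_rk} in terms of vanishing of minors, and then pass from ``vanishing at every point'' to ``lying in the radical'' via Hilbert's Nullstellensatz. Since $M(z)$ is obtained from $M_\delta(z)$ by deleting the last column, one always has $\rank M(z_0)\leq \rank M_\delta(z_0)$ for every $z_0\in F^n$, and strict inequality at $z_0$ is equivalent to the existence of some $r$ for which all $r\times r$ minors of $M(z)$ vanish at $z_0$ while some $r\times r$ minor of $M_\delta(z)$ does not.

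For the forward direction, I would start from Proposition \ref{prop_rk}: if $\delta\notin\AID(\g)$, then some $z_0\in F^n$ witnesses $\rank M(z_0)<\rank M_\delta(z_0)$. Setting $r=\rank M_\delta(z_0)$, I would pick an $r\times r$ minor $w$ of $M_\delta(z)$ with $w(z_0)\neq 0$, so $w\in K_{\delta,r}(z)\subseteq \mathcal{I}_{\delta,r}(z)$. Every generator of $\mathcal{I}_r(z)$ is an $r\times r$ minor of $M(z)$, hence vanishes at $z_0$, and so every element of $\mathcal{I}_r(z)$ does too. If $w$ were in $\sqrt{\mathcal{I}_r(z)}$, then $w^m\in\mathcal{I}_r(z)$ for some $m\geq 1$, forcing $w(z_0)^m=0$ and contradicting $w(z_0)\neq 0$. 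This half is purely formal and uses no hypothesis on $F$.

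For the converse I plan to invoke the strong Nullstellensatz: over an algebraically closed $F$, the radical $\sqrt{\mathcal{I}_r(z)}$ equals the ideal of all polynomials vanishing on the common zero locus $V(\mathcal{I}_r(z))\subseteq F^n$. From $w\notin\sqrt{\mathcal{I}_r(z)}$ I would extract a point $z_0\in V(\mathcal{I}_r(z))$ with $w(z_0)\neq 0$. Since every $r\times r$ minor of $M(z)$ lies in $\mathcal{I}_r(z)$ and thus vanishes at $z_0$, this gives $\rank M(z_0)<r$. Writing $w=\sum_i f_ik_i$ with $k_i\in K_{\delta,r}(z)$ and $f_i\in F[z_1,\ldots,z_n]$ forces some $k_i(z_0)\neq 0$, i.e.\ some $r\times r$ minor of $M_\delta(z_0)$ is nonzero, so $\rank M_\delta(z_0)\geq r$. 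Proposition \ref{prop_rk} then yields $\delta\notin\AID(\g)$.

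The delicate step is the converse: the Nullstellensatz is exactly what lets an ideal-theoretic statement about minors produce a genuine rational point at which the two matrices have different ranks, and this is where the algebraic-closure hypothesis is unavoidable. Over a non-closed field one could still compare $\mathcal{I}_r(z)$ and $\mathcal{I}_{\delta,r}(z)$, but one would have to pass to $\overline{F}$ (and then back down) to extract a witness, which is the main subtlety any implementation has to handle.
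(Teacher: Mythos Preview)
Your proposal is correct and follows essentially the same route as the paper's proof: translate the rank criterion of Proposition~\ref{prop_rk} into vanishing/non-vanishing of $r\times r$ minors at a witness point, and for the converse use the strong Nullstellensatz to produce that witness from $w\notin\sqrt{\mathcal{I}_r(z)}$. Your decomposition $w=\sum_i f_ik_i$ to extract a nonvanishing minor of $M_\delta(z_0)$ is in fact a bit more careful than the paper, which passes directly from $w(z_0)\neq 0$ to the rank inequality.
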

\begin{proof}
  If $\delta$ is not an almost-inner derivation, then there exists $\tilde z\in F^n$ such that $r=\rank(M_\delta(\tilde z))$ is larger than $\rank(M(\tilde z))$, that is, there is some $r\times r$ minor $w(\tilde z)$ of $M_\delta(\tilde z)$ that does not vanish, but all $r\times r$ minors of $M(\tilde z)$ are $0$. Thus, $\tilde z$ is a common root of all elements in $\mathcal{I}_r(z)$, but not of all elements in $\mathcal{I}_{\delta,r}(z)$. In particular, $w(z)$ lies in  $\mathcal{I}_{\delta,r}(z)$, but not in $\sqrt{\mathcal{I}_r(z)}$.

  Conversely, suppose that there exists some $r$ and $w\in\mathcal{I}_{\delta,r}(z)$ such that $w\notin \sqrt{\mathcal{I}_r(z)}$. Over an algebraically closed field, Hilbert's Nullstellensatz \cite[Theorem 4.1.2]{cox} says that $\sqrt{\mathcal{I}_r(z)}$ is exactly the set of all polynomials that vanish on all the common roots of the elements in $\mathcal{I}_r(z)$. This means that there is a common root $\tilde z$ for all the elements in $\mathcal{I}_r(z)$, but $\tilde z$ is not a root of $w(z)$. This implies that $\rank(M_{\delta,r}(\tilde z))$ is greater than $\rank(M_r(\tilde z))$, and therefore $\delta$ is not an almost-inner derivation by Proposition \ref{prop_rk}.
\end{proof}
  
We note that deciding  membership in the radical can be achieved without computing the radical, see \cite[Proposition 4.2.8]{cox}. 

\begin{corollary}\label{cor} Let $\delta\in V$ be a derivation.
  \begin{ithm}
  \item Over an algebraically closed field, $\delta\in\AID(\g)$ if and only if  $\mathcal{I}_{\delta,r}(z)\subseteq \sqrt{\mathcal{I}_r(z)}$ for every $r>1$.
  \item If $\mathcal{I}_{\delta,r}(z)\subseteq\sqrt{\mathcal{I}_r(z)}$ for each $r>1$, then $\delta\in\AID(\g)$ (even for non-algebraically closed fields).
  \end{ithm}
\end{corollary}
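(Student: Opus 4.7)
The plan is to deduce both parts of the corollary directly from the preceding Proposition by taking contrapositives; no new construction is required.

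For part (b), I would use only the first (unconditional) implication of the Proposition: if $\delta \notin \AID(\g)$, then there exist some $r$ and some $w \in \mathcal{I}_{\delta,r}(z)$ with $w \notin \sqrt{\mathcal{I}_r(z)}$. The contrapositive reads: if $\mathcal{I}_{\delta,r}(z) \subseteq \sqrt{\mathcal{I}_r(z)}$ for every $r$, then $\delta \in \AID(\g)$. This is exactly (b), and since the implication of the Proposition that I am negating makes no use of algebraic closedness (it is the elementary direction built straight from Proposition \ref{prop_rk} and the definition of radical), neither does (b).

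For part (a), I would additionally invoke the converse direction of the Proposition, which does rely on Hilbert's Nullstellensatz and hence on $F$ being algebraically closed: if there exist some $r$ and some $w \in \mathcal{I}_{\delta,r}(z)$ with $w \notin \sqrt{\mathcal{I}_r(z)}$, then $\delta \notin \AID(\g)$. Its contrapositive is the forward implication
\[
\delta \in \AID(\g) \;\Longrightarrow\; \mathcal{I}_{\delta,r}(z) \subseteq \sqrt{\mathcal{I}_r(z)} \text{ for every } r,
\]
and combining it with (b) yields the biconditional stated in (a).

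The real work — passing from non-vanishing of some polynomial on the variety cut out by $\mathcal{I}_r(z)$ to the actual existence of a bad point $\tilde z \in F^n$ where $\rank M_\delta(\tilde z) > \rank M(\tilde z)$ — has already been done inside the Proposition via Nullstellensatz, so the corollary is purely a matter of rephrasing. The only thing I expect to double-check is the range of $r$: the Proposition's argument uses the rank $r$ as the witness value of $\rank M_\delta(\tilde z)$, so I should verify that the stated condition covers every relevant $r$, and that neither direction silently demands information about the ideals at ranks where the rank equality cannot fail.
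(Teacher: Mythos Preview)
Your proposal is correct and matches the paper's approach: the corollary is stated without a separate proof, being an immediate reformulation of the preceding Proposition by contraposition, exactly as you describe. Your flag on the range of $r$ is well taken---the Proposition itself places no lower bound on $r$, so the restriction to $r>1$ in the corollary is not justified there and would need the extra (easy) observation that the rank-$1$ case cannot produce a discrepancy for the derivations under consideration.
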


\begin{remark} If the latter condition in part a) of the corollary does not hold, then there exists some $r\times r$ minor $w(z)\in\mathcal{I}_{\delta,r}(z)$ that does not lie in $\sqrt{\mathcal{I}_r(z)}$; in particular, there exists a point $\tilde z$ such that $w(\tilde z)$ is not zero, but $\tilde z$ is a common root of all the elements in $\mathcal{I}_r(z)$. One can attempt to find $\tilde z$ by working in the polynomial ring $F[z_1,\ldots,z_n,y]$ and finding a common zero of the polynomials in the ideal generated by $K_r(z)$ and $w(z)y-1$: such a common zero annihilates every generator in $K_r(z)$, but due to $w(\tilde z)y=1$, it cannot be a zero of $w(z)$. Once such an element $\tilde z\in F^n$ is found, one can reduce $V$ to $V\cap D_{b_{\tilde z}}$. Note that the latter intersection is smaller than $V$ since $\delta\in V$, but $\delta\notin D_{b_{\tilde z}}$.  Eventually, one can iterate this method to reduce $V$ to a smaller subspace such that one can verify that each generator is an almost-inner derivation. However, it is still a computational challenge to find these suitable points $\tilde z$.  In particular, for fields that are not algebraically closed, it is not even known whether the problem of finding such points is decidable, c.f.\ Hilbert's 10th Problem over $\Q$, see \cite[Section 2.6.4]{PoonenBook}. 
\end{remark}

\section{Computational examples}\label{sec_ex}
\noindent Our methods can quickly deal with small-dimensional Lie algebras. For example,
we went through the classification of $8$-dimensional filiform Lie algebras given in \cite{ango}. For the cases where the Lie algebra depends on a parameter
we set it equal to 1. Our methods readily compute the quotient
$\AID(\g)/\Inn(\g)$, and it has dimension $0$, $1$, or $2$ in all cases. When the dimension of the Lie algebra
increases we can still quickly get a good idea of $\AID(\g)/\Inn(\g)$ by
computing the intersection of many spaces $D_{z_0}$ for arbitrarily chosen $z_0$. However, proving
that a given derivation is almost-inner by the minors-method described above quickly
becomes cumbersome due to the large number of minors that has to be considered.

We now  illustrate our method with some computations in the algebra system Magma~\cite{magma}.

\begin{example} We consider the complex Lie algebra $\g=\g_{6,23}$ of \cite[p.\ 112]{phd}. It is a $6$-dimensional Lie algebra with basis $\{b_1,\ldots,b_6\}$ whose non-zero commutators are $[b_1,b_2]=b_3$, $[b_1,b_3]=b_5$, $[b_1,b_4]=b_6$, and $[b_2,b_4]=b_5$. The complement space to $\Inn(\g)$ in $\Der(\g)$ has dimension $10$, and the intersection of a few spaces $D_z$ quickly finds a $2$-dimensional subspace $U\leq \Der(\g)$ such that $\AID(\g)\leq \Inn(\g)\oplus U$. We choose a basis $\{\delta_1,\delta_2\}$ of $U$ and let $\delta=d_1\delta_1+d_2\delta_2$, such that  Equation \eqref{eq_Mz}  reads
\[\left(\begin{matrix} -z_2 & z_1 & 0 & 0 \\ -z_3 & -z_4 & z_1 & z_2 \\ -z_4 &0 & 0 &z_1\end{matrix}\right)\cdot (x_1,x_2,x_3,x_4)^\intercal = (-d_1 z_1,-d_2z_2,0)^\intercal.\]
  To show that $\delta_1\in\AID(\g)$, we consider $d_1=1$ and $d_2=0$, and the matrices
  \[M(z) = \left(\begin{matrix} -z_2 & z_1 & 0 & 0 \\ -z_3 & -z_4 & z_1 & z_2 \\ -z_4 &0 & 0 &z_1\end{matrix}\right)\quad\text{and}\quad
    M_\delta(z)\left(\begin{matrix} -z_2 & z_1 & 0 & 0 & -z_1\\ -z_3 & -z_4 & z_1 & z_2&0 \\ -z_4 &0 & 0 &z_1&0\end{matrix}\right).\]
      The following Magma code establishes that $\mathcal{I}_{\delta,3}(z)\subseteq \sqrt{\mathcal{I}_3(z)}$ and $\mathcal{I}_{\delta,2}(z)\subseteq \sqrt{\mathcal{I}_2(z)}$, and now Corollary \ref{cor}  proves that $\delta_1$ is in $\AID(\g)$. The same computation with $M_\delta(z)$ adjusted to $\delta=\delta_2$ proves that $\delta_2\in \AID(\g)$. Thus, in this case, $\AID(\g)=\Inn(\g)\oplus U$ for some $2$-dimensional $U$.

{\footnotesize
\begin{verbatim}
    Mv  := Matrix([[-z2,z1,0,0,0],[-z3,-z4,z1,z2,-z2],[-z4,0,0,z1,0]]);
    M   := Matrix([[-z2,z1,0,0],[-z3,-z4,z1,z2],[-z4,0,0,z1]]);
    I3  := Radical(ideal<P|Minors(M,3)>);
    I3v := ideal<P|Minors(Mv,3)>;
    I2  := Radical(ideal<P|Minors(M,2)>);
    I2v := ideal<P|Minors(Mv,2)>;
    forall(i){ i : i in GroebnerBasis(I2v) | i in I2};
    //true
    forall(i){ i : i in GroebnerBasis(I3v) | i in I3};
    //true 
\end{verbatim}
}
\end{example}

\begin{example}
Now let $\g$ be the complex $5$-dimensional Lie algebra of \cite[Lemma 8.2.11]{phd}, whose non-vanishing brackets are $[b_1, b_4] = b_1$, $[b_1, b_5] = -b_2$, $[b_2, b_4] = b_2$, $[b_2, b_5] = b_1$, $[b_4, b_5] = b_3$. A complement $U$ to $\Inn(\g)$ in $\Der(\g)$ has dimension $2$, and we choose a basis $\{\delta_1,\delta_2\}$. For $\delta=\delta_1$, we obtain the following calculations: 

{\footnotesize
\begin{verbatim}
    C<i> := CyclotomicField(4);
    P<z1,z2,z3,z4,z5,x1,x2,x3,x4,x5,y,d1,d2>:= PolynomialRing(C,13);
    m   := Matrix([[-z4,-z5,0,z1,z2],[z5,-z4,0,z2,-z1],[0,0,0,-z5,z4]]);
    mv  := Matrix([[-z4,-z5,0,z1,z2,-z1],[z5,-z4,0,z2,-z1,-z2],
                   [0,0,0,-z5,z4,0]]);
    I3  := Radical(ideal<P|Minors(m,3)>);
    I3v := ideal<P|Minors(mv,3)>;
    forall(i){ i : i in GroebnerBasis(I3v) | i in I3};
    //false; next, we try to find a suitable \tilde z
    gb  := GroebnerBasis(I3);
    exists(min){min : min in Minors(mv,3) | not min in I3};
    // true
    Append(~gb, min*y - 1);
    gb  := GroebnerBasis(tmp);
    // [ z1^2*z5*y + z2^2*z5*y - 1,   z4^2 + z5^2 ]
    > [Evaluate(f,[1,1,0,-i,1,0,0,0,0,0,1/2,0,0]) : f in gb];
    // [ 0, 0 ]
\end{verbatim}}
At the end of this computation we have found $\tilde z=(1,1,0,-i,1)$ and $y=1/2$, where $i=\imath$ is a primitive $4$-th root of unity, such that $D_z$ with $z=b_1+b_2-\imath b_4+b_5$ satisfies $\dim(U\cap D_z)=1$. One can now iterate this process and eventually establish that $\AID(\g)=\Inn(\g)$. The quoted code also shows that over the subfield $\mathbb{R}$ it is not possible to find such an element $\tilde z$: the equation $z_4^2+z_5^2=0$ forces $z_4=z_5=0$, but then $z_1^2 z_5 y + z_2^2 z_5 y - 1=-1$ is never $0$.
\end{example}

\begin{example}
In \cite[Propositions 3.5 and 3.8]{BM} two examples of Lie algebras $\g$ over
fields of positive characteristic were given such that the quotient $\Der(\g)/\Inn(\g)$ is simple and non-solvable. The first example is $\mathfrak{psl}(3,F)$, where $F$ is a field of characteristic 3. The second example is the ideal $J$ generated by the short root vectors of the simple Lie algebra of type $F_4$ over  a field of characteristic 2. In both cases, by computing the intersection of a
small number of spaces $D_{z_0}$, our methods quickly show that $\AID(\g)=\Inn(\g)$.
\end{example}

\section{A non-abelian $\Sha(\g)$}\label{secsha}
\noindent We now provide an affirmative answer to \cite[Question 4.1(i)]{boris} in the positive characteristic case (\!\cite[Section 4.1.2]{boris}), namely, we construct a Lie algebra $\g$ such that $\Sha(\g)=\AID(\g)/\Inn(\g)$ is non-abelian. The Lie algebra $\g$ is defined by a finite $p$-group constructed in \cite[Theorem p.\ 67]{Sah}. We briefly describe the construction of that group, and then comment on the construction of $\g$ and computation of $\AID(\g)$.

Let $p$ be a prime, let $F=\mathbb{F}_{q}$ be the field with $q=p^3$ elements, and let $R=F^3$ be the $3$-dimensional $F$-vector space. Denote by $\{1,\pi,\pi^2\}$ an $F$-basis of $R$ and define a left multiplication on $R$ by $\pi f=f^q \pi$ for $f\in F$, and $\pi^i\pi^j = \pi^{i+j}$ if $i+j\leq 2$ and $\pi^i\pi^j=0$ otherwise. The multiplicative group $U_1(R)=1+R\pi$  acts via left multiplication on the additive group $R$, giving rise to the split extension \[G_p = (R,+)\rtimes U_1(R).\] If $\{1,\alpha,\alpha^2\}$ is a $\Z_p$-basis of $F$, then $U_1(R)$ is generated by  $\{1+\alpha^i\pi^j\mid i=0,1,2;\; j=1,2\}$, the centre $Z$ of $U_1(R)$ is generated by $\{1+\alpha^i\pi^2\mid i=0,1,2\}$, and $Z\cong U_1(R)/Z \cong (\Z/p\Z)^3$. Thus, $G_p$ can be described as an extension $C_p^9\rtimes (C_p^3.C_p^3)$ of order $p^{15}$; here $C_n$ denotes a cyclic group of order $n$.

For $p=2,3$ we have constructed $G_p$ in GAP \cite{gap} as a group given by a polycyclic presentation. For $p=2$, the group $U_1(R)$  can be reconstructed in GAP as SmallGroup(64,82); the group $G_2$ has order $2^{15}=32768$ and can be reconstructed in GAP as PcGroupCode(c,32768), where
{\footnotesize
  \begin{align*}
    c=&7967110418574553081670398114259915186817035422811746306474238743720883172561132845\\
    &8148063967477625450818367910830831150163051940361653140207335697269643407173799006\\
     &1156240486161235975.
\end{align*}}For $p=3$, the groups $U_1(R)$ and $G_3$ are isomorphic to the groups constructed in GAP as\linebreak SmallGroup(729, 122) and PcGroupCode(c,14348907), where
{\footnotesize
  \begin{align*}
    c=&23942575175932849938787447903034850559076995269050972392619102284965697037971664120\\
    &90204677927634837434203516265451902627457581534280475495170411505144605912923993727\\
    &1123090348191913725150811094324857778021113477919143155502672960892416353913323520.
\end{align*}}
There are several standard ways to attach a Lie algebra to a finite $p$-group.
Here we use the $p$-central series $G_p=G_p^1\geq G_p^2\geq  G_p^3\geq G_p^4=1$. The quotients $G_i/G_{i+1}$ are elementary abelian $p$-groups, and 
hence can be viewed as vector spaces over the field $\mathbb{F}_p$ with $p$ elements. The Lie algebra
$\g_p$ is the direct sum of these spaces and the Lie bracket on $\g_p$ is
induced by the commutator in $G_p$. We refer to \cite[\S 1.4]{graaf} for a
precise account (in this reference the Jennings series is used, but for
the $p$-central series it works in the same way).

For $\g_2$ and $\g_3$ we computed the intersection of a large number of
spaces $D_{z_0}$ for arbitrarily chosen $z_0\in \g_p$. In both cases, this quickly yields a space of
dimension $21$ of possible almost-inner derivations that are not inner.
By running over {\em all} elements $z_0\in \g_p$, we then proved that
this space indeed consists of almost-inner derivations. We note that when extending the field and considering the Lie algebras $\mathbb{F}_{8}\otimes_{\mathbb{F}_2} \g_2$ and $\mathbb{F}_{27}\otimes_{\mathbb{F}_3} \g_3$,
it is quickly shown by computing the intersection of a number of
spaces $D_{z_0}$ that the algebra of almost-inner derivations is equal to
the algebra of inner derivations. This is reflected by the fact that for the
ideals generated by minors we have many $k$ such that 
$\mathcal{I}_{\delta,k}(z)$ is not contained in $\sqrt{\mathcal{I}_k(z)}$.
However, these ideals turn out to be too complicated to be analysed 
further in detail. 

We provide some more details for $\g_3$: this algebra has basis $\{v_1,\ldots,v_{15}\}$ with multiplication table given in Figure \ref{fig_g3}. The algebra of derivations of $\g_3$ has dimension $45$, and the ideal of inner
derivations $\Inn(\g_3)$ has dimension $12$. In the subalgebra $\AID(\g_3)$ 
of almost-inner derivations satisfies $\AID(\g_3)=\Inn(\g_3)\oplus U$ where $U$ is a subalgebra of dimension $21$. In Figure \ref{fig_2} we explicitly define two non-commuting elements $d_1$ and $d_2$ of $U$.

\begin{figure}\small
\addtolength{\jot}{-0.8ex}
\begin{align*}
[v_1, v_2] &= 2v_7, & [v_3, v_4] &= 2v_{11}, \\
[v_1, v_3] &= v_7+2v_8, & [v_3, v_5] &= v_{11}+2v_{12}, \\
[v_1, v_4] &= 2v_{10}, &[v_3, v_6] &= 2v_{10}+v_{12}, \\ 
[v_1, v_5] &= v_{11}, & [v_3, v_{10}] &= 2v_{13}+v_{14}, \\
[v_1, v_6] &= v_{12}, & [v_3, v_{11}] &= v_{13}+2v_{14}+2v_{15}, \\
[v_1, v_{10}] &= 2v_{13}, & [v_3, v_{12}] &= v_{13}+v_{14}+2v_{15}, \\
[v_1, v_{11}] &= v_{14}+v_{15}, &   [v_4, v_7] &= v_{13}, \\
[v_1, v_{12}] &= 2v_{15}, & [v_4, v_8] &= 2v_{15}, \\
[v_2, v_3] &= 2v_8+v_9, & [v_4, v_9] &= v_{14}+v_{15}, \\
[v_2, v_4] &= 2v_{12}, & [v_5, v_7] &= v_{14}+v_{15}, \\
[v_2, v_5] &= 2v_{10}+v_{12}, & [v_5, v_8] &= 2v_{13}+v_{15}, \\
[v_2, v_6] &= v_{11}, & [v_5, v_9] &= 2v_{14}+v_{15}, \\
[v_2, v_{10}] &= v_{13}+2v_{15}, & [v_6, v_7] &= 2v_{15}, \\
[v_2, v_{11}] &= v_{13}+2v_{14}+v_{15}, & [v_6, v_8] &= 2v_{14}+2v_{15}, \\
[v_2, v_{12}] &= v_{14}+2v_{15}, & [v_6, v_9] &= 2v_{13}+v_{15}. 
\end{align*}
\caption{Structure constants for  $\g_3$; if $[v_i,v_j]$ is not listed, then $[v_i,v_j]=0$.}\label{fig_g3}
\end{figure}

\begin{figure}\small
\addtolength{\jot}{-0.8ex}
\begin{align*}\small
d_1( v_2 )&=v_7,  & d_2( v_2 )&=  v_{10},\\
d_1( v_3 )&=2v_7 + v_8, & d_2( v_7 )&=  v_{13},\\
d_1( v_4 )&= 2v_{11} + 2v_{12}, & d_2( v_8 )&=  v_{13},\\
d_1( v_5 )&= 2v_{10}, & d_2( v_9 )&=  2v_{13} + 2v_{14},\\
d_1( v_6 )&= v_{10}, &\\
d_1( v_{10} )&= v_{14}, &\\ 
d_1( v_{11} )&=  v_{13}, &\\
d_1( v_{12} )&=  2v_{13}.
\end{align*}
\caption{Definitions of two non-commuting elements of $U$; if $d_i(v_k)$ is not listed, then  $d_i(v_k)=0$. }\label{fig_2}
\end{figure}

{\small

}

\end{document}